\documentclass{elsarticle}

\usepackage[centertags]{amsmath}
\usepackage{amsfonts}
\usepackage{amssymb}
\usepackage{amsthm}
\usepackage{newlfont}

\newtheorem{theorem}{Theorem}

\newtheorem{lemma}{Lemma}
\newtheorem{proposition}{Proposition}
\newtheorem{corollary}{Corollary}

\newtheorem{definition}{Definition}

\newtheorem{remark}{Remark}

\newcommand{\inn}{\mathrm{ int}\,}

\newfont{\gothic}{eufm9 scaled 1200}

\begin{document}

\begin{frontmatter}
\title{A theorem of Piccard's type and its applications to polynomial functions and convex functions of higher orders}
\author{Eliza  Jab{\l}o{\'n}ska}
\ead{elizapie@prz.edu.pl}

\address{Department of Mathematics, Rzesz{\'o}w University of
Technology, Powsta\'{n}c\'{o}w~Warszawy~12, 35-959~Rzesz{\'o}w, POLAND}


\begin{abstract}
In the paper a theorem of Piccard's type is proved and, consequently, the continuity of $\mathcal{D}$-measurable polynomial functions of $n$-th order as well as $\mathcal{D}$-measurable $n$-convex functions is shown. The paper refers to the papers \cite{Gajda} and \cite{EJ}.
\end{abstract}
\begin{keyword}
Haar meager set, $\mathcal{D}$-measurable set, $\mathcal{D}$-measurable function, polynomial function, convex function
\MSC 54B30, 54E52, 39B52, 39B62
\end{keyword}
\end{frontmatter}

\section{Introduction}

In 2013 U.B.~Darji defined a $\sigma$-ideal of "small" sets in an abelian Polish group which is equivalent to the notation of meager sets in a locally compact group. He was motivated by J.P.R. Christensen's paper \cite{Ch} where the author defined \textit{Haar null} sets in an abelian Polish group  in such a way that in a locally compact group it is equivalent to the notation of Haar measure zero sets (see also \cite{Myc}, \cite{HSY}-\cite{HSY1}).

It is well known that there are some analogies between measure and category in locally compact groups (see \cite{Oxtoby}). It turns out (see \cite{Ch}, \cite{Darji} and \cite{EJ}) that in non-locally compact abelian Polish groups Haar meager sets have an analogous behavior to Haar null sets; e.g.
\begin{itemize}
\item both families are invariant $\sigma$-ideals,
\item $0\in\inn (A-A)$ for each universally measurable non-Haar null set as well as each Borel non-Haar meager set $A$,
\item each compact set is Haar null and Haar meager,
\item each set containing a translation of each compact set is non-Haar meager and non-Haar null.
\end{itemize}
Moreover, the notation of Christensen measurability introduced by P.~Fischer and Z. S{\l}odkowski \cite{FS} seems to be analogous to the notation of $\mathcal{D}$-measurability from \cite{EJ}; e.g.
\begin{itemize}
\item each Christensen measurable or $\mathcal{D}$-measurable homomorphism is continuous,
\item each Christensen measurable or $\mathcal{D}$-measurable $t$-Wright convex function is continuous
\end{itemize}
(see \cite{FS}, \cite{EJ}-\cite{JMI}, \cite{Olbrys}).

Here we prove a theorem of Piccard's type\footnote{We call it in this way because a consequence of this theorem is the fact that $0\in\inn  (A-A)$; cf. \cite[Theorem 2.9.1]{Kuczma}.} and next we show that each $\mathcal{D}$-measurable function which is $n$-convex with respect to the cone $S$ is continuous, as well as
each $\mathcal{D}$-measurable polynomial function of $n$-th order is continuous. Analogous results for Christensen measurable function has been obtained by Z. Gajda in \cite{Gajda}. 

\section{Preliminaries on $\mathcal{D}$-measurability}

The following fact will be useful in the sequel:

\begin{theorem}{\em \cite[p. 90]{DS}}
 If $X$ is an abelian Polish group, there exists an equivalent complete metric $\rho$ on $X,$ which is invariant; i.e. $\rho(x,y)=\rho(x+z,y+z)$ for every $x,y,z\in X.$
\end{theorem}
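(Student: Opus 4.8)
The plan is to obtain $\rho$ by combining the Birkhoff--Kakutani metrization theorem with a completion argument, invoking completeness only at the end through the Baire category theorem. First I would use Birkhoff--Kakutani: being an abelian Polish group, $X$ is in particular a first-countable Hausdorff topological group, so it carries a compatible \emph{left}-invariant metric $d$, i.e.\ $d(z+x,z+y)=d(x,y)$ for all $x,y,z\in X$. Since $X$ is abelian we have $z+x=x+z$, so $d$ is invariant in the required sense, $d(x+z,y+z)=d(x,y)$. This already produces an equivalent invariant metric; the entire difficulty is to replace it by a \emph{complete} one without destroying invariance.

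Next I would pass to the completion $(\widehat X,\widehat d)$ of $(X,d)$. The decisive point is that invariance forces the group operations to be uniformly continuous: for $x,y,x',y'\in X$,
\[
 d(x+y,x'+y')\le d(x+y,x'+y)+d(x'+y,x'+y')=d(x,x')+d(y,y'),
\]
and $d(-x,-x')=d(x,x')$. Thus addition is uniformly continuous and negation is an isometry, so both extend continuously to $\widehat X$; this makes $\widehat X$ an abelian topological group containing $X$ as a \emph{dense subgroup}, and $\widehat d$ is a complete metric on $\widehat X$ which, by density and continuity, is again invariant.

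It remains to show that $X=\widehat X$, for then $d$ itself is complete and $\rho:=d$ is the desired metric. Here I would exploit that $X$, being Polish, is completely metrizable, so by Alexandrov's theorem $X$ is a $G_\delta$ subset of the complete space $\widehat X$. A dense $G_\delta$ subset of a complete metric space is comeager; hence for every $g\in\widehat X$ both $X$ and the translate $g+X$ are comeager in $\widehat X$. Two comeager subsets of a Baire space must intersect, so there are $x_1,x_2\in X$ with $x_1=g+x_2$, whence $g=x_1-x_2\in X$. Therefore $X=\widehat X$.

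The main obstacle I anticipate is precisely this last step -- recognising that a dense $G_\delta$ \emph{subgroup} of a completely metrizable group must be the whole group. Its proof rests on the interplay of two ingredients: the Baire category theorem, available because $\widehat X$ is complete, and the group structure, which guarantees that every translate of the comeager set $X$ is again comeager. Granting this, invariance is inherited from $d$ and completeness is now automatic, so $\rho:=d$ is an equivalent complete invariant metric, as required.
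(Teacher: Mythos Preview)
Your argument is correct. The Birkhoff--Kakutani step gives an invariant metric $d$ compatible with the topology, and your verification that inversion is an isometry is valid because in an abelian group $d(-x,-y)=d(-x+(x+y),-y+(x+y))=d(y,x)=d(x,y)$. The completion argument is standard and sound: the uniform-continuity estimate extends the group operations to $\widehat X$, Alexandrov's theorem makes the Polish subgroup $X$ a dense $G_\delta$ in the complete space $\widehat X$, and the Baire-category translation trick then forces $X=\widehat X$.

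As for comparison with the paper: there is nothing to compare. The paper does not prove this theorem at all; it merely quotes it from Dunford--Schwartz \cite[p.~90]{DS} as background, to be used later (for instance, in the proof of Theorem~\ref{G} when estimating $\rho\big(\sum a(k_i)x_i,0\big)$). Your proposal therefore supplies a self-contained proof where the paper offers only a citation. This is a perfectly reasonable thing to do, and the route you chose---Birkhoff--Kakutani plus the ``dense $G_\delta$ subgroup is everything'' lemma---is the standard one; it is essentially the argument one finds, e.g., in Becker--Kechris or in Kechris's \emph{Classical Descriptive Set Theory}.
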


Let us recall some necessary definitions and theorems from \cite{Darji} and \cite{EJ}.

\begin{definition}
\em{\cite[Definition~2.1]{Darji}}
 Let $X$ be an abelian Polish group. A set $A\subset X$ is \textit{Haar meager} iff there is a Borel set $B\subset X$ with $A\subset B$, a compact metric space $K$ and a continuous function
 $f:K\to X$ such that $f^{-1}(B+x)$ is meager in $K$ for all $x\in X$. The family of all Haar meager sets in $X$ we denote by $\mathcal{HM}$.
\end{definition}

By the definition we can easily see that the family of all Haar meager sets is invariant, i.e. $A+x\in\mathcal{HM}$ for each $x\in X$ and $A\subset X$.

\begin{theorem}{\em \cite[Theorem~2.2, Theorem~2.9]{Darji}}\label{t1}
In each abelian Polish group the family $\mathcal{HM}$ is a~$\sigma$-ideal contained in the $\sigma$-ideal of all meager sets.
\end{theorem}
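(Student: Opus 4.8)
The plan is to verify the three ingredients of the claim separately: that $\mathcal{HM}$ is hereditary, that it is closed under countable unions, and that every Haar meager set is meager. Heredity is immediate: if $A\subset B$ is witnessed by a compact metric space $K$ and a continuous $f:K\to X$ as in the definition, and $A'\subset A$, then $A'\subset B$ and the very same pair $(K,f)$ witnesses $A'\in\mathcal{HM}$; together with $\emptyset\in\mathcal{HM}$ this gives the ideal axioms apart from countable additivity.

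For the inclusion into the meager ideal, take $A\in\mathcal{HM}$ with Borel hull $B\supset A$ and witness $(K,f)$, where we may assume $K\neq\emptyset$. I would consider the continuous map $\Psi:X\times K\to X$, $\Psi(x,k)=f(k)-x$, and the Borel set $P=\Psi^{-1}(B)$. For each fixed $x$ the section $\{k:(x,k)\in P\}=f^{-1}(B+x)$ is meager in $K$, so the Kuratowski--Ulam theorem yields that $P$ is meager in $X\times K$. Applying Kuratowski--Ulam in the other coordinate, there is a comeager, hence nonempty, set of $k\in K$ for which the section $\{x:f(k)-x\in B\}=f(k)-B$ is meager in $X$. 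Fixing one such $k_{0}$ and noting that $y\mapsto f(k_{0})-y$ is a homeomorphism of $X$ (so it preserves meagerness), meagerness of $f(k_{0})-B$ forces $B$, and therefore $A$, to be meager in $X$.

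The substantial point is closure under countable unions. Let $A_{n}\in\mathcal{HM}$ have Borel hulls $B_{n}$ and witnesses $(K_{n},f_{n})$; the aim is a single witness for $\bigcup_{n}A_{n}$, whose Borel hull is $\bigcup_{n}B_{n}$. Since $f_{n}^{-1}(B_{n}+x)$ is meager for every $x$, translating $f_{n}$ is harmless, so the real task is to force the images $f_{n}(K_{n})$ to have small $\rho$-diameter, where $\rho$ is the invariant complete metric supplied by the cited theorem. I would cover the compact set $f_{n}(K_{n})$ by finitely many balls of radius $2^{-n}$; their $f_{n}$-preimages form an open cover of $K_{n}$, so one of them is a nonempty open set $U_{n}$ with $f_{n}(U_{n})$ of diameter at most $2^{-n+1}$. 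Replacing $K_{n}$ by $K_{n}':=\cl U_{n}$ and $f_{n}$ by its restriction, translated so that $0\in f_{n}(K_{n}')$, one checks that $(K_{n}',f_{n})$ still witnesses $A_{n}$: being meager in $K_{n}$, each $f_{n}^{-1}(B_{n}+y)$ stays meager in the open subspace $U_{n}$ and hence in $\cl U_{n}$, since $\cl U_{n}\setminus U_{n}$ is nowhere dense there. Now $f_{n}(K_{n}')$ lies in a $\rho$-ball of radius $2^{-n+1}$ about $0$. Setting $K:=\prod_{n}K_{n}'$, a nonempty compact metric space, define $f:K\to X$ by $f(k)=\sum_{n}f_{n}(k_{n})$; invariance and completeness of $\rho$ make the partial sums uniformly Cauchy, so $f$ is well defined and continuous. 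For fixed $n$ and $x$, freezing the coordinates $m\neq n$ turns the corresponding section of $f^{-1}(B_{n}+x)$ into $f_{n}^{-1}(B_{n}+y)$ for some $y\in X$, meager in $K_{n}'$; as all such sections are meager and $f^{-1}(B_{n}+x)$ is Borel, Kuratowski--Ulam gives that $f^{-1}(B_{n}+x)$ is meager in $K$. Then $f^{-1}\bigl(\bigcup_{n}B_{n}+x\bigr)=\bigcup_{n}f^{-1}(B_{n}+x)$ is a countable union of meager sets, hence meager for every $x$, so $\bigcup_{n}A_{n}\in\mathcal{HM}$.

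The crux is the diameter-reduction. In the Haar null setting one restricts a test measure to a small ball and renormalises, and the restricted measure trivially still annihilates every translate of $A_{n}$; here the analogue is restricting the domain of $f_{n}$, and it is not automatic that meagerness of the preimages $f_{n}^{-1}(B_{n}+x)$ survives, since meagerness in a subspace is not inherited from the ambient space in general. I therefore expect the main obstacle to be tracing this meagerness through the passage to the open set $U_{n}$ and then to $\cl U_{n}$, which is exactly where the nowhere-density of $\cl U_{n}\setminus U_{n}$ is used. Once the images are small, the invariant metric makes the series converge, and the two applications of Kuratowski--Ulam play the role of the Fubini computations familiar from the Haar null theory.
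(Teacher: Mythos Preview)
The paper does not prove this statement at all; it is quoted from Darji's paper \cite{Darji} as an imported result, with no argument given. So there is nothing in the paper to compare against.

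Your argument is correct and is essentially Darji's original proof. Heredity is trivial; the inclusion $\mathcal{HM}\subset$ meager is exactly the Kuratowski--Ulam ``flip'' you describe; and countable additivity is obtained by shrinking each witnessing domain so that the images have summable diameters, forming the product, and summing the maps. One small point worth making explicit: after restricting $f_n$ to $U_n$ you only control $f_n^{-1}(B_n+y)\cap U_n$, and the remaining part $f_n^{-1}(B_n+y)\cap(\cl U_n\setminus U_n)$ must be added back in; your parenthetical ``since $\cl U_n\setminus U_n$ is nowhere dense there'' covers this, but the sentence as written speaks only about the set restricted to $U_n$. Otherwise the reductions (nonemptiness of $K$, translation of $f_n$, Kuratowski--Ulam applied to the Borel set $f^{-1}(B_n+x)$ with the factorisation $K\cong K_n'\times\prod_{m\neq n}K_m'$) are all in order.
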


\begin{remark}
In \cite{Darji}  Darji proved that in a locally compact abelian Polish group the family of all Haar meager sets and the family of all meager sets are equivalent, but in each non-locally compact abelian Polish group it is not true.
\end{remark}

\begin{definition}\em{\cite[Definition~1]{EJ}}
Let $X$ be an abelian Polish group. A set $A\subset X$ is \textit{$\mathcal{D}$-measurable} iff $A=B\cup M$ for a Haar meager set $M\subset X$ and a Borel set
$B\subset X$.
\end{definition}

\begin{theorem}{\em \cite[Theorem~6]{EJ}}\label{t2}
The family $\mathcal{D}$ of all $\mathcal{D}$-measurable sets in each abelian Polish group is a~$\sigma$-algebra.
\end{theorem}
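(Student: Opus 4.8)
The plan is to verify directly the three defining closure properties of a $\sigma$-algebra: that $\mathcal{D}$ contains $\emptyset$, that it is closed under complementation, and that it is closed under countable unions. The first is immediate, since $\emptyset$ is Borel and hence $\mathcal{D}$-measurable. Closure under countable unions will be routine: if $A_k = B_k \cup M_k$ with each $B_k$ Borel and each $M_k$ Haar meager, then $\bigcup_k A_k = \left(\bigcup_k B_k\right) \cup \left(\bigcup_k M_k\right)$, where the first term is Borel as a countable union of Borel sets, and the second is Haar meager because, by Theorem~\ref{t1}, $\mathcal{HM}$ is a $\sigma$-ideal. Hence $\bigcup_k A_k \in \mathcal{D}$.

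The crux is closure under complementation. The key preliminary observation I would record is that every Haar meager set is contained in a \emph{Borel} Haar meager set: indeed, the definition of a Haar meager set $M$ already supplies a Borel set $B \supset M$ together with a compact metric space $K$ and a continuous $f\colon K \to X$ such that $f^{-1}(B+x)$ is meager for all $x \in X$, and these same data witness that $B$ itself is Haar meager.

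Granting this, I would take an arbitrary $A = B \cup M \in \mathcal{D}$ with $B$ Borel and $M$ Haar meager, and fix a Borel Haar meager set $M'$ with $M \subset M'$. Then
\[
X \setminus A = (X \setminus B) \cap (X \setminus M) = \bigl[(X \setminus B) \cap (X \setminus M')\bigr] \cup \bigl[(X \setminus B) \cap (M' \setminus M)\bigr].
\]
The first bracket equals $X \setminus (B \cup M')$, which is Borel because $B$ and $M'$ are both Borel. The second bracket is a subset of $M'$, hence Haar meager, since $\mathcal{HM}$, being a $\sigma$-ideal, is closed under taking subsets. Therefore $X \setminus A$ is the union of a Borel set and a Haar meager set, i.e. $X \setminus A \in \mathcal{D}$.

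I expect the main obstacle to be precisely this complementation step, and more specifically the realization that one should not attempt to decompose $X \setminus M$ directly, but instead enlarge $M$ to a Borel Haar meager set $M'$ and split the complement along $M'$. Once the ``Borel superset'' observation is in place, the remainder is elementary set algebra combined with the $\sigma$-ideal properties already guaranteed by Theorem~\ref{t1}.
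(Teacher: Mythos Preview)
Your argument is correct: the verification of $\emptyset\in\mathcal{D}$ and closure under countable unions is routine, and for complementation the key observation---that every Haar meager set is, by definition, contained in a \emph{Borel} Haar meager set---lets you split $X\setminus A$ exactly as you do. Note, however, that the present paper does not supply its own proof of this theorem; it is quoted from \cite[Theorem~6]{EJ}, so there is no in-paper argument to compare against. Your proof is the standard one for showing that the family $\{B\cup M: B\ \text{Borel},\ M\in\mathcal{I}\}$ is a $\sigma$-algebra whenever $\mathcal{I}$ is a $\sigma$-ideal in which every member admits a Borel hull from $\mathcal{I}$, and it is essentially what one finds in \cite{EJ}.
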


\begin{definition}{\em \cite[Definition~2]{EJ}}
Let $X$ be an abelian Polish group and $Y$ be a~topological space. A~mapping $f:X\to Y$ is a \textit{$\mathcal{D}$-measurable function} iff $f^{-1}(U)\in\mathcal{D}$ in $X$ for each open set $U\subset Y$.
\end{definition}

It is easy to observe the following

\begin{proposition}\label{jed}
Let $X$ be a real linear Polish space. If $A\subset X$ is Haar meager and $\alpha\in\mathbb{R}$ then $\alpha A$ is also Haar meager.
\end{proposition}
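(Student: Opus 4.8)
The plan is to treat the cases $\alpha\neq 0$ and $\alpha=0$ separately, the first being the substantial one. The whole idea is that for $\alpha\neq 0$ the scaling map $h_\alpha\colon X\to X$, $h_\alpha(x)=\alpha x$, is a homeomorphism of $X$ (its inverse is $h_{\alpha^{-1}}$, and both maps are continuous since $X$ is a topological vector space), and such a homeomorphism should transport a Haar-meagerness witness for $A$ into one for $\alpha A$.

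Concretely, since $A$ is Haar meager, first I would fix a Borel set $B\supset A$, a compact metric space $K$ and a continuous $f\colon K\to X$ with $f^{-1}(B+x)$ meager in $K$ for every $x\in X$. For $\alpha\neq 0$ I would keep the same compact space $K$, and set $B':=\alpha B$ and $f':=\alpha f=h_\alpha\circ f$. Then $B'$ is Borel (being the image of a Borel set under the homeomorphism $h_\alpha$) and contains $\alpha A$, while $f'$ is continuous. The key verification is the set identity: for each $x\in X$,
\[
(f')^{-1}(B'+x)=\{k\in K: \alpha f(k)\in\alpha B+x\}=\{k\in K: f(k)\in B+\alpha^{-1}x\}=f^{-1}(B+\alpha^{-1}x),
\]
where the middle equality merely divides the membership relation by $\alpha$. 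Since $\alpha^{-1}x$ ranges over all of $X$ as $x$ does, the right-hand side is meager in $K$ for every $x$, which is exactly the witness showing that $\alpha A$ is Haar meager.

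For $\alpha=0$ the map $h_0$ collapses everything, so the computation above breaks down and I would argue directly. If $A=\emptyset$ there is nothing to prove. Otherwise $0\cdot A=\{0\}$, and a singleton is compact, hence Haar meager (compact sets are Haar meager, as recalled in the introduction). Alternatively one may exhibit a witness explicitly: choose $v\in X\setminus\{0\}$, put $K=[0,1]$ and $f(t)=tv$; then $f^{-1}(\{0\}+x)=\{t\in[0,1]: tv=x\}$ has at most one element and is therefore meager. Note that $A$ being nonempty and Haar meager, hence meager by Theorem~\ref{t1}, forces $X\neq\{0\}$, so such a $v$ exists.

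I do not expect a genuine obstacle here. The only points needing care are the verification that scalar multiplication by a nonzero scalar is a homeomorphism (so that it simultaneously preserves the Borel measurability of $B$ and the meagerness of the relevant preimages) and the bookkeeping in the displayed set identity. The degenerate case $\alpha=0$ is the sole place where the homeomorphism argument fails and must be replaced by the observation that singletons are Haar meager.
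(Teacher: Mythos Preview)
Your proof is correct and follows essentially the same route as the paper: for $\alpha\neq 0$ you compose the witness $f$ with the scaling homeomorphism $h_\alpha$ and verify the preimage identity $(\alpha f)^{-1}(\alpha B+x)=f^{-1}(B+\alpha^{-1}x)$, exactly as the paper does. Your handling of $\alpha=0$ is more careful than the paper's one-word ``trivial,'' and the explicit $[0,1]$-witness you give is a clean way to dispose of that case.
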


\begin{proof}
Let $\alpha\neq0$ (the case $\alpha=0$ is trivial). There exist a Borel set $B\supset A$, a compact metric space $K$ and a continuous function $f:K\to X$ such that $f^{-1}(x+B)$ is meager in $K$ for each $x\in X$. Define
$$g(k):=\alpha f(k)\;\;\mbox{for}\;\;k\in K.$$
Then, for each $y\in X$, we have
$$
g^{-1}(y+\alpha B)=g^{-1}\left(\alpha \left(\frac{y}{\alpha}+B\right)\right)=f^{-1}\left(\frac{y}{\alpha}+B\right).
$$
It means that the set $g^{-1}(y+\alpha A))$ is meager in $K$ and, consequently, $\alpha B$ is a~Borel Haar meager set. Thus $\alpha A\subset \alpha B$ is Haar meager.
\end{proof}

\section{On a theorem of Piccard's type}

Now we prove a lemma (cf. \cite[Lemma 15.5.1]{Kuczma}), which is a theorem of Piccard's type.

\begin{lemma}\label{B}
If $(X,+)$ is an abelian metric group and $A\subset X$ is a non-meager
set with the Baire property, then, for $n\in\mathbb{N}$,
$$
0\in \mathrm{int}\left \{x\in X: \bigcap _{k\in J_n} (A+kx) \mbox{\;is non-meager in\;} X\right\},
$$
where $J_n:=\{-n,\ldots,-1,0,1,\ldots,n\}$.
\end{lemma}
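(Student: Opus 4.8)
\emph{Proof proposal.} The plan is to exploit the Baire property of $A$ together with the translation invariance of the ideal of meager sets, reducing everything to a single geometric observation about translates of one fixed open ball. First I would replace the given metric by an invariant one: since $X$ is a metrizable abelian group we may, by the Birkhoff--Kakutani construction (this is the tool behind the invariant metric quoted on p.~90 of \cite{DS} in the Polish case), assume $d(x,y)=d(x+z,y+z)$ for all $x,y,z$. Next, because $A$ has the Baire property, I would write $A=G\,\triangle\,M$ with $G$ open and $M$ meager; then $G\setminus A\subseteq M$ is meager, so $A$ is comeager in $G$, and since $A$ is non-meager the open set $G$ must be nonempty.

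I would then fix a point $g_0\in G$ and a radius $r>0$ with the ball $B(g_0,r)\subseteq G$, and set
$$W:=B(g_0,r/2),\qquad V:=\{x\in X: d(0,x)<r/(2n)\}.$$
The key step is to verify that $W\subseteq G+kx$ for every $k\in J_n$ and every $x\in V$. Here invariance is used twice. On the one hand $B(g_0,r)+kx=B(g_0+kx,r)\subseteq G+kx$. On the other hand a telescoping use of the triangle inequality gives $d(0,kx)\le |k|\,d(0,x)\le n\,d(0,x)<r/2$ for $x\in V$, since $d(jx,(j+1)x)=d(0,x)$ for each $j$. Hence any $w\in W$ satisfies $d(w,g_0+kx)\le d(w,g_0)+d(0,kx)<r$, i.e. $w\in B(g_0+kx,r)\subseteq G+kx$, as required.

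To finish, I would invoke translation invariance of the meager ideal: $(G+kx)\setminus(A+kx)=(G\setminus A)+kx$ is meager, so each $A+kx$ is comeager in $G+kx$, hence comeager in the smaller set $W$. Since a finite intersection of comeager sets is comeager,
$$W\setminus\bigcap_{k\in J_n}(A+kx)=\bigcup_{k\in J_n}\bigl(W\setminus(A+kx)\bigr)$$
is meager; as $W$ is a nonempty open set, and therefore non-meager, the intersection $\bigcap_{k\in J_n}(A+kx)$ is non-meager for every $x\in V$. Thus $V$ is contained in the set on the right-hand side of the claimed inclusion, and since $V$ is a neighbourhood of $0$ this yields $0$ in its interior.

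I expect the main obstacle to be the simultaneous containment $W\subseteq\bigcap_{k\in J_n}(G+kx)$: one must fit a single open set inside all $2n+1$ translates at once, and this is precisely what forces the use of an invariant metric and the uniform estimate $d(0,kx)\le n\,d(0,x)$, since a non-invariant metric gives no common control over the various shifts. A secondary point I would state explicitly is that nonempty open sets are non-meager, which is where the completeness (Baire property) of $X$ enters the argument.
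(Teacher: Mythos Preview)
Your argument is correct and follows essentially the same route as the paper's proof: both pick an open ball inside the open kernel $G$ of $A$, use invariance of the metric to fit one ball inside all $2n+1$ translates $G+kx$ for small $x$ (the paper does this implicitly via the estimate $(1-k)x\in K(0,\varepsilon)$, with $\varepsilon/(n+1)$ in place of your $r/(2n)$), and then note that $\bigcup_{k\in J_n}\bigl(W\setminus(A+kx)\bigr)$ is meager. One small correction to your closing remark: the non-meagerness of the nonempty open set $W$ does not require completeness of $X$ (which the lemma does not assume); it follows already from the hypothesis that $A$ is non-meager, since a topological group containing a non-meager subset is automatically a Baire space.
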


\begin{proof}
Let $A=\left(G\cup I_1\right)\setminus I_2,$ where $G$ is nonempty and open and $I_1, I_2$ are meager in $X$. Fix $g\in G$ and define $G_0:=G-g$, $A_0:=A-g$. Since $G_0$ is open and $0\in G_0$, so there exists $\varepsilon>0$ such that
$$U:=K(0,\varepsilon)\subset G_0.$$

Now, take any $x\in K\left(0,\frac{\varepsilon}{n+1}\right)$. Then $x\in U+kx$ (since $(1-k)x\in U$) for $k\in J_n$. Define
$$U_x:=\bigcap_{k\in J_n} (U+kx).$$
Clearly $U_x\neq\emptyset$ (because $x\in U_x$) and $U_x$ is open. Moreover,
$$
U_x\setminus (A_0+kx)\subset (G_0+kx)\setminus (A_0+kx)=(G_0\setminus A_0)+kx\subset I_2-g+kx
$$
for $k\in J_n$,
so $U_x\setminus (A_0+kx)$ is meager for such $k$. On the other hand
$$
\begin{array}{ll}
U_x &\subset\left[\bigcap_{k\in J_n} (A_0+kx)\right]\cup \left\{U_x\setminus \left[\bigcap_{k\in J_n} (A_0+kx)\right]\right\}\\[1.5ex]
& \subset\left[\bigcap_{k\in J_n} (A_0+kx)\right]\cup \bigcup_{k\in J_n}[U_x\setminus (A_0+kx)].
\end{array}$$
 Hence $\bigcap_{k\in J_n} (A_0+kx)$ is non-meager and so does the set  $$\bigcap_{k\in J_n} (A_0+g+kx)=\bigcap_{k\in J_n} (A+kx).$$

In this way we have proved that
$$K\left(0,\frac{\varepsilon}{n+1}\right) \subset\left\{x\in X:\bigcap_{k\in J_n}(A+kx) \mbox{\;is non-meager in\;} X\right\},$$
what ends the proof.
\end{proof}

Since in an abelian Polish group each Haar meager set is meager but the converse implication generally doesn't hold we would like to know if the above result can be generalized. In fact we prove an ana\-logous result to Gajda's theorem for non-zero Christensen measurable set.

\begin{theorem}\label{G}
Let $X$ be an abelian Polish group.
If $A\subset X$ is a Borel non-Haar meager set and $n\in\mathbb{N}$, then the set $$F_n(A):=\{x\in X: \bigcap_{k\in J_n} (A+kx)\not\in \mathcal{HM}\}$$ is a nonempty open set.
\end{theorem}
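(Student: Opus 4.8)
The plan is to prove the two assertions separately. Nonemptiness is free: at $x=0$ one has $\bigcap_{k\in J_n}(A+k\cdot0)=A\notin\mathcal{HM}$, so $0\in F_n(A)$. For openness it is enough to check that an arbitrary $x_0\in F_n(A)$ lies in the interior. Writing $x=x_0+h$ and putting $B_k:=A+kx_0$, one has $\bigcap_{k\in J_n}(A+kx)=\bigcap_{k\in J_n}(B_k+kh)$ and $\bigcap_{k\in J_n}B_k=\bigcap_{k\in J_n}(A+kx_0)\notin\mathcal{HM}$; thus $F_n(A)-x_0=\{h\in X:\bigcap_{k\in J_n}(B_k+kh)\notin\mathcal{HM}\}$, and the task reduces to showing that this set is a neighbourhood of $0$. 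This is exactly a Haar-meager, multi-set version of Lemma \ref{B}.

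I would first record the meager analogue, since the proof of Lemma \ref{B} generalises verbatim: if $B_{-n},\dots,B_n$ have the Baire property and $\bigcap_k B_k$ is non-meager, then $0\in\inn\{h:\bigcap_k(B_k+kh)\text{ is non-meager}\}$. Indeed, choosing a nonempty open $G$ with $G\setminus\bigcap_kB_k$ meager gives $G\setminus B_k$ meager for each $k$; for $h$ small the set $\bigcap_k(G+kh)$ is open and nonempty (it contains a fixed $g\in G$ once every $\rho(kh,0)$ is small), while $\bigcap_k(G+kh)\setminus\bigcap_k(B_k+kh)\subseteq\bigcup_k[(G\setminus B_k)+kh]$ is meager, so $\bigcap_k(B_k+kh)$ is non-meager. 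To pass from meagerness to Haar-meagerness I would invoke the defining compact witnesses together with the Kuratowski--Ulam theorem: the Borel set $E:=\bigcap_k(B_k+kh)$ is non-Haar-meager as soon as, for every compact metric $K$ and continuous $f:K\to X$, some translate $f^{-1}(E+t)$ is non-meager in $K$, so I would fix such a pair $(K,f)$, form the Borel set $R:=\{(\kappa,t)\in K\times X:f(\kappa)\in E+t\}$ and apply Kuratowski--Ulam, reducing the existence of a good $t$ to the non-meagerness of the fibres $R_\kappa=\bigcap_k\big(f(\kappa)-kh-B_k\big)$ for a non-meager set of $\kappa\in K$.

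Each fibre has the multi-set shape above, and the meager lemma makes $R_\kappa$ non-meager for all small $h$ provided its kernel $\bigcap_k(f(\kappa)-B_k)=f(\kappa)-E_{x_0}$ is non-meager, i.e. provided $E_{x_0}=\bigcap_k(A+kx_0)$ is non-meager. When this holds the argument closes, and in fact yields the stronger meager-category statement. The genuine obstacle, and the step on which I expect to spend the most effort, is that a Borel non-Haar-meager set need not be non-meager: in a non-locally-compact group there are Borel sets that are meager yet outside $\mathcal{HM}$, and for such an $E_{x_0}$ every fibre $f(\kappa)-E_{x_0}$ is meager, so the fibrewise lemma is vacuous. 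This is the precise point where category and Haar-meagerness part ways: unlike a non-meager Baire set, a non-Haar-meager set possesses no open kernel, so the overlapping-ball mechanism of Lemma \ref{B} (the inclusion $x\in U+kx$) has no direct counterpart. I would therefore expect the meager reduction to dispose of the non-meager case, and the remaining, essential case to require the intrinsic Haar-meager technique underlying $0\in\inn(A-A)$ for Borel non-Haar-meager $A$: argue by contradiction from non-interiority to obtain $x_m\to x_0$ with $\bigcap_k(A+kx_m)\in\mathcal{HM}$, use that $\mathcal{HM}$ is a $\sigma$-ideal so that $\bigcup_m\bigcap_k(A+kx_m)$ is a Borel Haar-meager set with a single compact witness, and manufacture from it a Darji-type witness, carrying the arithmetic progression of shifts $kx$, that forces $\bigcap_k(A+kx_0)\in\mathcal{HM}$ and contradicts $x_0\in F_n(A)$.
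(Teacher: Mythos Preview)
Your diagnosis of the obstacle is exactly right: a Borel non-Haar-meager set may well be meager in $X$, so one cannot run Lemma~\ref{B} in $X$ directly, and your Kuratowski--Ulam detour collapses at just that point. The contradiction framework you outline --- take $x_m\to x_0$ with $x_m\notin F_n(A)$ and observe that $\bigcup_m\bigcap_{k\in J_n}(A+kx_m)\in\mathcal{HM}$ --- is also how the paper begins (at $x_0=0$; the passage to arbitrary $x_0$ is the content of Corollary~\ref{c1}). The genuine gap is the last move. You propose to take the compact witness for the Haar-meager \emph{union} and ``manufacture from it'' a witness that forces $\bigcap_k(A+kx_0)\in\mathcal{HM}$. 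I do not see how this can work: a witness $(K',f')$ for the union tells you that every translate of $\bigcup_m\bigcap_k(A+kx_m)$ pulls back to a meager set in $K'$, but it carries no information about the limit set $\bigcap_k(A+kx_0)$, and there is no continuity of $x\mapsto\bigcap_k(A+kx)$ to bridge the two.

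The paper proceeds dually, and this is the missing idea. Instead of exploiting the \emph{existence} of a witness for the small union, it exploits the \emph{universal} quantifier in ``non-Haar-meager'' applied to the large remainder $A_0:=A\setminus\bigcup_m\bigcap_{k\in J_n}(A+kx_m)$, choosing the compact space and test map by hand so as to encode the sequence $(x_m)$ itself. Concretely one takes $K:=\{0,1,\dots,2n\}^{\aleph_0}$ (a compact abelian group) and $g\big((k_i)\big):=\sum_i a(k_i)\,x_i$, where $a$ recodes $\{0,\dots,2n\}$ as $J_n$; the point of this construction is that a shift by the basis vector $e_s$ in $K$ becomes, after bookkeeping, a shift by $x_s$ in $X$. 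Since $A_0\notin\mathcal{HM}$, some $Z:=g^{-1}(A_0+y)$ is non-meager in $K$; now Lemma~\ref{B} applies \emph{inside $K$}, producing for some $s$ a point of $\bigcap_{j\in J_n}(Z\oplus je_s)$, and pushing forward under $g$ yields $z\in\bigcap_{j\in J_n}(A_0+jx_s)$. But then $z\in A_0$ while simultaneously $z\in\bigcap_{k\in J_n}(A+kx_s)$, contradicting the very definition of $A_0$. In short: do not try to witness the smallness of the union; test the largeness of its complement against a purpose-built Cantor system whose group structure carries the arithmetic progression of shifts.
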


\begin{proof}
Clearly $0\in F_n(A)$, because $A\not\in\mathcal{HM}$.
For the indirect proof suppose that there exists a sequence $(x_i)_{i\in\mathbb{N}}\subset X$ such that
$$x_i\in \overline{K}(0,2^{-i})\setminus F_n(A)\;\;\mbox{for each}\;\;i\in\mathbb{N}.
 $$
 Now define $$A_0:=A\setminus \bigcup_{i=1}^\infty\left[\bigcap_{k\in J_n}(A+kx_i)\right].$$
 Since
 $x_i\not\in F_n(A)$ for each $i\in\mathbb{N},$ so $\bigcap_{k\in J_n}(A+kx_i)\in\mathcal{HM}$ for $i\in\mathbb{N}.$ Hence, by Theorem~\ref{t1}, $\bigcup_{i\in\mathbb{N}}\bigcap_{k\in J_n} (A+kx_i)\in\mathcal{HM}$ and, consequently, $A_0\not\in\mathcal{HM},$ because $A\not\in\mathcal{HM}.$

Since $A_0\not\in\mathcal{HM},$ for every compact metric space $K$ and continuous function $g:K\to X$ there is a $y_K\in X$ such that $g^{-1}(A_0+y_K)$ is non-meager in $K.$

Let $K:=\{0,1,\ldots,2n\}^{\aleph_0}$. It is well known that it is a compact abelian metric group with the operation $\oplus :K\times K\to K$ given by
$$
(k_i)_{i\in\mathbb{N}}\oplus(l_i)_{i\in\mathbb{N}}:=(k_i+_{2n+1} l_i)_{i\in\mathbb{N}}\;\;\mbox{for}\;\;(k_i)_{i\in\mathbb{N}},(l_i)_{i\in\mathbb{N}}\in K$$
(where $+_{2n+1}$ denotes the operation modulo $2n+1$)
and with the product metric
$$
d((k_i)_{i\in\mathbb{N}},(l_i)_{i\in\mathbb{N}}):=\sum_{i=1}^{\infty} 2^{-i}\overline{d}(k_i,l_i)\;\;\mbox{for}\;\;(k_i)_{i\in\mathbb{N}},(l_i)_{i\in\mathbb{N}}\in K, $$
where $\overline{d}$ is the discrete metric in $\{0,1,\ldots,2n\}.$ Define a function $g:K\to X$ as follows:
\begin{equation}\label{g}
g((k_i)_{i\in\mathbb{N}})=\sum_{i=1}^{\infty} a(k_i)x_i\;\;\mbox{for}\;\; (k_i)_{i\in\mathbb{N}}\in K,
\end{equation}
where $a:\{0,1,\ldots,2n\}\to J_n$ is given by
$$
a(m)=\left\{
\begin{array}{ll}
m,& m\in\{0,1,\ldots, n\},\\
m-(2n+1),& m\in\{n+1,\ldots,2n\}.
\end{array}
\right.
$$
To see that $g$ is well defined and continuous it is enough to prove that the series $\sum_{i=1}^{\infty} a(k_i)x_i$ is uniformly convergent. So, fix $\varepsilon>0.$ Then we can find a~positive integer $N$ such that $n2^{-N}<\varepsilon.$ Hence, for each $k>l\geq N$ we have:
$$
\begin{array}{ll}
\displaystyle\rho\left(\sum_{i=1}^k a(k_i)x_i,\sum_{i=1}^l a(k_i)x_i\right)&=\displaystyle \rho\left(0,\sum_{i=l+1}^k a(k_i)x_i\right)\leq\sum_{i=l+1}^k \rho\left(0,a(k_i)x_i\right)\\[1.5ex]
&\leq n \sum_{i=l+1}^k \rho\left(0,x_i\right)\leq\displaystyle n\sum_{i=l+1}^k 2^{-i}\\[1.5ex]
&\leq n \sum_{i=l+1}^\infty 2^{-i}=n2^{-l}\leq n2^{-N}< \varepsilon.
\end{array}
$$
Consequently, since $X$ is complete, the series $\sum_{i=1}^{\infty} a(k_i)x_i$ is uniformly convergent.

Now, for $K=\{0,1,\ldots,2n\}^{\aleph_0}$ and function $g$ given by \eqref{g} the set
 $$Z:=g^{-1}(A_0+y_K)$$ is not meager in $K$ for some $y_K\in X.$ Moreover, since $A$ is a Borel set, so does $A_0$ and $Z.$ Thus the set $Z$ is non-meager with the Baire property in $K$ and, in view of Lemma~\ref{B},
there is an open ball $K(0,2^{-t})$ with some $t\in\mathbb{N}$ such that
$$
K(0,2^{-t})\subset \{(k_i)_{i\in\mathbb{N}}\in K: \bigcap_{j\in J_n} \left(Z\oplus j(k_i)_{i\in\mathbb{N}}\right)\;\mbox{is not meager in}\;K\}.
$$
Let $s:=t+1$ and $e_{s}:=(0,\ldots,0,1,0,0,\ldots),$ where $1$ is in the $s$-th place. Since
$d(0,e_{s})=2^{-s}<2^{-t},$ so $e_{s}\in K(0,2^{-t}).$ Hence $$\bigcap_{j\in J_n} (Z\oplus je_{s})\neq\emptyset,$$ i.e. there is a $v:=(v_i)_{i\in\mathbb{N}}\in Z$ such that $v\ominus je_s\in Z$ for each $j\in J_n$. Thus
$$g(v\ominus je_s)\in A_0+y_K\;\;\mbox{for}\;\;j\in J_n.$$

Everyone can check that:
\begin{itemize}
\item if $v_s=0$ then
$$
g(v)-g(v\ominus je_s)=[-a(0-_{2n+1}j)]x_s=
jx_s\;\;\mbox{for}\;\; j\in J_n;$$
\item if $v_s\in\{1,\ldots, n\}$ then
$$\begin{array}{ll}
g(v)-g(v\ominus je_s)&=[v_s-a(v_s-_{2n+1}j)]x_s\\[1ex]
&=
\left\{\begin{array}{ll}
jx_s, & \;\; j\in\{-n+v_s,\ldots,n\},\\[1ex]
(j+2n+1)x_s, & \;\; j\in\{-n,\ldots,-n+v_s-1\};
\end{array}\right.
\end{array}
$$
\item if $v_s\in\{n+1,\ldots,2n\}$ then
$$
\begin{array}{ll}
g(v)-g(v\ominus je_s)&=[v_s-(2n+1)-a(v_s-_{2n+1}j)]x_s\\[1ex]
&=\left\{\begin{array}{ll}
jx_s, & j\in\{-n,\ldots,v_s-n-1\},\\[1ex]
(j-2n-1)x_s, & j\in\{v_s-n,\ldots,n\}.
\end{array}\right.
\end{array}
$$
\end{itemize}
Hence, if $v_s=0$ then $$g(v)=g(v\ominus je_s)+jx_s\;\;\mbox{for}\;\;j\in J_n$$ and, consequently,
$$g(v)\in \bigcap _{j\in J_n} (A_0+y_K+jx_s).$$
Next, if $v_s\in\{1,\ldots,n\}$ then
$$\begin{array}{l}
g(v)-v_sx_s=\\[1ex]
\;\;\;\;\;\;\;=\left\{
\begin{array}{ll}
g(v\ominus je_s)+(j-v_s)x_s,&j\in\{-n+v_s,\ldots,n\},\\[1ex]
g(v\ominus (j-2n-1)e_s)+(j-v_s)x_s,&j\in\{n+1,\ldots,n+v_s\},
\end{array}\right.\\[3 ex]
\;\;\;\;\;\;\;=\left\{
\begin{array}{ll}
g(v\ominus (k+v_s)e_s)+kx_s,& \;\;k\in\{-n,\ldots,n-v_s\},\\[1ex]
g(v\ominus (k+v_s-2n-1)e_s)+kx_s,& \;\;k\in\{n-v_s+1,\ldots,n\}
\end{array}\right.
\end{array}
$$
and hence
$$g(v)-v_sx_s\in \bigcap _{j\in J_n} (A_0+y_K+jx_s).$$
Finally, if $v_s\in\{n+1,\ldots,2n\}$ then
$$\begin{array}{l}
g(v)+(2n+1-v_s)x_s=\\[1ex]
\;\;\;\;\;\;\;=\left\{
\begin{array}{l}
g(v\ominus je_s)+(j+2n+1-v_s)x_s,\;\;\;\;\;\;\;\;j\in\{-n,\ldots,v_s-n-1\},\\[1ex]
g(v\ominus (j+2n+1)e_s)+(j+2n+1-v_s)x_s,\\[1ex]
\;\;\;\;\;\;\;\;\;\;\;\;\;\;\;\;\;\;\;\;\;\;\;\;\;\;\;\;\;\;\;\;\;\;\;\;\;\;\;\;\;\;\;\;\;\;\;\;\;\;\;\;\;\;\;\;\;\;\;j\in\{v_s-3n-1,\ldots,-n-1\}
\end{array}\right.\\[3 ex]
\;\;\;\;\;\;\;=\left\{
\begin{array}{l}
g(v\ominus (k-2n-1+v_s)e_s)+kx_s,\;\;\;\;\;\;\;k\in\{n+1-v_s,\ldots,n\},\\[1ex]
g(v\ominus (k+v_s)e_s)+kx_s,\;\;\;\;\;\;\;\;\;\;\;\;\;\;\;\;\;\;\;\;\;k\in\{-n,\ldots,n-v_s\}.
\end{array}\right.
\end{array}
$$
Thus
$$g(v)+(2n+1-v_s)x_s\in \bigcap _{j\in J_n} (A_0+y_K+jx_s).$$

It means that in all three cases there exists $z\in X$ such that
\begin{equation}\label{x7}
z\in \bigcap_{j\in J_n}(A_0+jx_s).
\end{equation}
Hence, $z\in A_0$ (for $j=0$) and, by the definition of $A_0$,
$z\in A$ and $z\not\in \bigcap_{k\in J_n} (A+kx_s)$. But $A_0\subset A$, so $z\not\in \bigcap_{k\in J_n} (A_0+kx_s)$, what contradicts \eqref{x7} and ends the proof.
\end{proof}

From the above theorem we obtain three interesting corollaries.

\begin{corollary}\label{c1}
For every $n\in\mathbb{N}$ and Borel set $A\subset X$ the set $F_n(A)$ is open (but not necessarily nonempty).
\end{corollary}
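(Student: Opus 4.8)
The plan is to argue by a simple dichotomy on whether or not $A$ belongs to $\mathcal{HM}$, reducing one case to the theorem just proved and disposing of the other by an elementary observation.

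First I would treat the case $A\notin\mathcal{HM}$. Since $A$ is assumed Borel, Theorem~\ref{G} applies verbatim and yields that $F_n(A)$ is a nonempty open set; in particular it is open, which is all the corollary asserts in this case.

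The remaining case is $A\in\mathcal{HM}$. Here the key remark is that $0\in J_n$, so the intersection $\bigcap_{k\in J_n}(A+kx)$ always contains the term corresponding to $k=0$, namely $A$ itself; hence $\bigcap_{k\in J_n}(A+kx)\subset A$ for every $x\in X$. Because $\mathcal{HM}$ is a $\sigma$-ideal (Theorem~\ref{t1}), it is closed under passing to subsets, so $A\in\mathcal{HM}$ forces $\bigcap_{k\in J_n}(A+kx)\in\mathcal{HM}$ for every $x$. Consequently no $x$ satisfies the defining condition of $F_n(A)$, i.e. $F_n(A)=\emptyset$, which is trivially open.

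Combining the two cases shows that $F_n(A)$ is open for every Borel $A$, while the Haar meager case exhibits the possibility $F_n(A)=\emptyset$, justifying the parenthetical remark. I do not anticipate a genuine obstacle: the only points requiring care are the inclusion $\bigcap_{k\in J_n}(A+kx)\subset A$ (immediate from $0\in J_n$) and the closure of $\mathcal{HM}$ under subsets (immediate from the definition of Haar meagerness, since any witness $B,K,f$ for $A$ also witnesses meagerness of a subset). The substantive work was already carried out in Theorem~\ref{G}; this corollary merely packages it together with the trivial Haar meager case.
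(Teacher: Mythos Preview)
Your dichotomy is logically sound and the Haar meager case is handled correctly. However, the paper takes a genuinely different route, and the reason is worth noting.

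In your Case~1 you invoke Theorem~\ref{G} \emph{as stated} to conclude that $F_n(A)$ is open whenever $A$ is Borel and non--Haar meager. But look back at the proof of Theorem~\ref{G}: the indirect argument there assumes a sequence $x_i\in\overline{K}(0,2^{-i})\setminus F_n(A)$ and derives a contradiction. That is the negation of ``$0$ has a neighborhood contained in $F_n(A)$'', so what the proof actually establishes is $0\in\inn F_n(A)$, not openness at an arbitrary point. Your shortcut therefore leans on exactly the part of the statement of Theorem~\ref{G} that its proof does not supply.

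The paper's proof of Corollary~\ref{c1} is what fills this gap. It fixes an arbitrary $x\in F_n(A)$, forms the Borel non--Haar meager set $B_n:=\bigcap_{j\in J_n}(A+jx)$, applies Theorem~\ref{G} to $B_n$ to obtain a neighborhood of $0$ inside $F_n(B_n)$, and then proves the inclusion $x+F_n(B_n)\subset F_n(A)$ by checking inductively that
\[
\bigcap_{j\in J_n}\bigl(B_n+j(z-x)\bigr)\subset\bigcap_{j\in J_n}(A+jz).
\]
This translation step is the actual content of the corollary: it upgrades ``$0$ is interior'' to ``every point of $F_n(A)$ is interior''. So while your argument is formally valid against the theorem as stated, the paper's longer proof is not redundant---it is completing the openness claim that the proof of Theorem~\ref{G} left to this corollary.
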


\begin{proof}
Suppose that the set $F_n(A)$ is nonempty and take any $x\in F_n(A)$. Then $$B_n:=\bigcap_{j\in J_n}(A+jx)$$ is a Borel non-Haar meager set. Hence, applying Theorem~\ref{G}, we have that $F_n(B_n)$ is a nonempty open set.

To end the proof it is enough to observe that $x+F_n(B_n)\subset F_n(A)$.
So, let $z\in x+F_n(B_n)$. Then $\bigcap_{j\in J_n} \left(B_n+j(z-x)\right)\not\in\mathcal{HM}$. We have to show
by induction that
\begin{equation}\label{w2}
\bigcap_{j\in J_n} \left(B_n+j(z-x)\right)\subset \bigcap_{j\in J_n} (A+jz)\;\;\mbox{for each}\;\;n\in\mathbb{N},
\end{equation}
because then $z\in F_n(A)$.

Clearly, for $n=1$ we have
$$\begin{array}{l}
B_1\cap (B_1+z-x)\cap (B_1-z+x)=\\[1ex]
\;\;\;\;\;\;\;\;\;\;\;\;\;\;\;=\left[(A+x)\cap A\cap (A-x)\right]\\[1ex]
\;\;\;\;\;\;\;\;\;\;\;\;\;\;\;\;\;\;\;\cap\; \left[(A+z)\cap (A+z-x)\cap (A+z-2x)\right]\\[1ex]
\;\;\;\;\;\;\;\;\;\;\;\;\;\;\;\;\;\;\;\cap\; \left[(A+2x-z)\cap (A-z+x)\cap (A-z)\right]\\[1ex]
\;\;\;\;\;\;\;\;\;\;\;\;\;\;\;\;\subset A\cap (A+z)\cap (A-z).
\end{array}
$$

Now assume that for some $n\geq 1$ condition \eqref{w2} holds. Then, since $B_{n+1}\subset B_n$ and $B_{n+1}\subset (A+(n+1)x)\cap (A-(n+1)x)$,
$$\begin{array}{l}
\bigcap_{j\in J_{n+1}} \left(B_{n+1}+j(z-x)\right)=\\[1ex]
\;\;\;\;\;\;\;\;\;\;\;\;\;\;=\bigcap_{j\in J_n} \left(B_{n+1}+j(z-x)\right)\\[1ex]
\;\;\;\;\;\;\;\;\;\;\;\;\;\;\;\;\;\;\cap\; \left(B_{n+1}+(n+1)(z-x)\right)\cap (B_{n+1}-(n+1)(z-x))\\[1ex]
\;\;\;\;\;\;\;\;\;\;\;\;\;\;\subset\left[\bigcap_{j\in J_n} \left(B_n+j(z-x)\right)\right]\cap (A+(n+1)z)\cap (A-(n+1)z)\\[1ex]
\;\;\;\;\;\;\;\;\;\;\;\;\;\;\subset\left[\bigcap_{j\in J_n} (A+jz))\right]\cap (A+(n+1)z)\cap (A-(n+1)z)\\[1ex]
\;\;\;\;\;\;\;\;\;\;\;\;\;\;=\bigcap_{j\in J_{n+1}} (A+jz),
\end{array}
$$
what ends the proof.
\end{proof}

\begin{corollary}\label{c2}
Let $X$ be an abelian Polish group. If $A\subset X$ is a $\mathcal{D}$-measurable set and $n\in\mathbb{N}$ then $F_n(A)$ is open (not necessarily nonempty).
If, moreover, $A\subset X$ is non-Haar meager $\mathcal{D}$-measurable set then $F_n(A)$ is nonempty.
\end{corollary}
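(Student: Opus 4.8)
The plan is to reduce everything about the $\mathcal{D}$-measurable set $A$ to its Borel part, so that the already-proved Borel results, Corollary~\ref{c1} and Theorem~\ref{G}, apply directly. By the definition of $\mathcal{D}$-measurability I may write $A=B\cup M$ with $B$ Borel and $M\in\mathcal{HM}$. The key claim I would establish is that $F_n(A)=F_n(B)$; granting this, the first assertion follows from Corollary~\ref{c1} (which makes $F_n(B)$ open for Borel $B$) and the second from Theorem~\ref{G}.

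To prove $F_n(A)=F_n(B)$ I would first record, for a fixed $x\in X$, the set-theoretic decomposition
$$\bigcap_{k\in J_n}(A+kx)=\bigcap_{k\in J_n}(B+kx)\;\cup\;N_x,$$
where $N_x\in\mathcal{HM}$. This comes from expanding the intersection of the unions $A+kx=(B+kx)\cup(M+kx)$ by the distributive law: the result is a finite union of intersections indexed by choice functions, exactly one term of which is the purely Borel $\bigcap_{k\in J_n}(B+kx)$, while every other term contains at least one factor $M+kx$. Since $\mathcal{HM}$ is invariant, each $M+kx$ is Haar meager; since the definition of a Haar meager set is hereditary (any subset of a Borel witnessing set is again Haar meager) and $\mathcal{HM}$ is a $\sigma$-ideal by Theorem~\ref{t1}, the union $N_x$ of all the non-Borel terms is Haar meager.

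With this decomposition in hand, I would invoke the $\sigma$-ideal property of $\mathcal{HM}$ once more to see that $\bigcap_{k\in J_n}(A+kx)\in\mathcal{HM}$ holds if and only if $\bigcap_{k\in J_n}(B+kx)\in\mathcal{HM}$: adding the Haar meager $N_x$ cannot leave $\mathcal{HM}$, and conversely the Borel part, being a subset of the whole intersection, is Haar meager whenever the whole is, by heredity. Hence the defining condition for membership in $F_n$ is the same for $A$ and for $B$, i.e. $F_n(A)=F_n(B)$. Openness of $F_n(A)$ is now immediate from Corollary~\ref{c1}.

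For the nonemptiness under the extra hypothesis $A\notin\mathcal{HM}$, I would argue that $B\notin\mathcal{HM}$: otherwise $A=B\cup M$ would be a union of two Haar meager sets and hence Haar meager by Theorem~\ref{t1}, contradicting the assumption. Thus $B$ is a Borel non-Haar meager set, and Theorem~\ref{G} gives $F_n(B)\neq\emptyset$, so $F_n(A)=F_n(B)$ is nonempty. I expect the only delicate point to be the distributive expansion together with the justification that $\mathcal{HM}$ is hereditary; everything after the equality $F_n(A)=F_n(B)$ is a one-line appeal to the Borel case.
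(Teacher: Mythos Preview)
Your proposal is correct and follows essentially the same strategy as the paper: decompose $A=B\cup M$, show $\bigcap_{k\in J_n}(A+kx)=\bigcap_{k\in J_n}(B+kx)\cup N_x$ with $N_x\in\mathcal{HM}$, conclude $F_n(A)=F_n(B)$, and apply Corollary~\ref{c1}. The only cosmetic differences are that the paper establishes the decomposition by induction on $n$ rather than by a single distributive-law expansion, and for nonemptiness it simply observes $0\in F_n(A)$ directly instead of passing through Theorem~\ref{G} applied to $B$.
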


\begin{proof}
Let $A=B\cup C$ with a Borel set $B$ and a Haar meager set $C$ in $X$. Using induction we prove that for each $n\in\mathbb{N}$ there is a Haar meager set $T_n$ such that
\begin{equation}\label{w1}
\bigcap_{j\in J_n} (A+jx)=\left[\bigcap_{j\in J_n} (B+jx)\right]\cup T_n.
\end{equation}

First take $n=1$. Then
$$(A+x)\cap (A-x)\cap A=[(B+x)\cap (B-x)\cap B]\cup T_1,$$
where
$$
\begin{array}{l}
T_1:=\;[C\cap (B+x)\cap (B-x)]\\[1ex]
\;\;\;\;\;\;\;\;\;\;\cup\;[(C-x)\cap (B+x)\cap (B\cup C)] \\[1ex]
\;\;\;\;\;\;\;\;\;\;\cup \;\{(C+x)\cap [(B-x)\cup (C-x)]\cap (B\cup C)\}\\[1ex]
\;\;\;\;\;\;\subset C\cup (C-x)\cup (C+x).
\end{array}$$
Since $C$ is Haar meager, by Theorem~\ref{t1} $T_1$ is also Haar meager.

Now assume that for some $n\geq 1$ condition \eqref{w1} holds with
a Haar meager set $T_n$. Then
$$\begin{array}{l}
\bigcap_{j\in J_{n+1}} (A+jx)=\\[1ex]
\;\;\;\;\;\;\;\;\;\;\;\;\;\;=\bigcap_{j\in J_n} (A+jx)\cap (A+(n+1)x)\cap (A-(n+1)x)\\[1ex]
\;\;\;\;\;\;\;\;\;\;\;\;\;\;=\left\{\left[\bigcap_{j\in J_n} (B+jx)\right]\cup T_n\right\}\cap (A+(n+1)x)\cap (A-(n+1)x)\\[1ex]
\;\;\;\;\;\;\;\;\;\;\;\;\;\;=\left[\bigcap_{j\in J_{n}} (B+jx)\cap (A+(n+1)x)\cap (A-(n+1)x)\right]\\[1ex]
\;\;\;\;\;\;\;\;\;\;\;\;\;\;\;\;\;\;\cup\; \left[T_n\cap (A+(n+1)x)\cap (A-(n+1)x)\right],\\[1ex]

\;\;\;\;\;\;\;\;\;\;\;\;\;\;=\left[\bigcap_{j\in J_{n}} (B+jx)\cap ((B\cup C)+(n+1)x)\cap ((B\cup C)-(n+1)x)\right]\\[1ex]
\;\;\;\;\;\;\;\;\;\;\;\;\;\;\;\;\;\;\cup\; \left[T_n\cap (A+(n+1)x)\cap (A-(n+1)x)\right],\\[1ex]

\;\;\;\;\;\;\;\;\;\;\;\;\;\;=\left[\bigcap_{j\in J_{n+1}} (B+jx)\right]\cup T_{n+1},
\end{array}
$$
where
$$
\begin{array}{ll}
T_{n+1}&:=\left[\bigcap_{j\in J_{n}} (B+jx)\right]\cap \left\{\left[(B+(n+1)x)\cap (C-(n+1)x)\right]\right.\\[1ex]
&\;\;\;\;\;\cup \left.\left[(C+(n+1)x)\cap ((B\cup C)-(n+1)x)\right]\right\}\\[1ex]
&\;\;\;\;\;\cup\left[T_n\cap (A+(n+1)x)\cap (A-(n+1)x)\right]\\[1ex]
&\;\subset (C-(n+1)x)\cup (C+(n+1)x)\cup T_n.
\end{array}
$$
Since $C, T_n$ are Haar meager, so does $T_{n+1}.$

Next, in view of \eqref{w1}, we have $F_n(A)=F_n(B)$ for each $n\in\mathbb{N}$ and, according to Corollary~\ref{c1}, we have the thesis.

If moreover $A$ is not Haar meager then  $0\in F_n(A)$,
 what ends the proof.
  \end{proof}

\begin{corollary}\label{c6}
Let $X$ be a real linear Polish space. If $A\subset X$ is a $\mathcal{D}$-measurable non-Haar meager set and $n\in\mathbb{N}$ then
$$0\in \inn \left\{x\in X: \bigcap_{j=1}^n \left[\left(A-\frac{x}{j}\right)\cap\left(A+\frac{x}{j}\right)\right]\neq \emptyset\right\}.$$
\end{corollary}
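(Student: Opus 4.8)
The plan is to deduce this from Corollary~\ref{c2} by applying it not with the index $n$ but with the index $n!$, exploiting that $n!/j$ is a positive integer for every $j\in\{1,\dots,n\}$.

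First I would observe that, since $A$ is a $\mathcal{D}$-measurable non-Haar meager set, Corollary~\ref{c2} (applied with $n!$ in place of $n$) guarantees that
$$F_{n!}(A)=\Bigl\{u\in X:\ \bigcap_{k\in J_{n!}}(A+ku)\notin\mathcal{HM}\Bigr\}$$
is a nonempty open set; moreover $0\in F_{n!}(A)$. Because the empty set is Haar meager, for every $u\in F_{n!}(A)$ the intersection $\bigcap_{k\in J_{n!}}(A+ku)$ is in particular nonempty, so there is $z\in X$ with $z-ku\in A$ for all $k\in\{-n!,\dots,n!\}$.

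The key step is the change of variable $u:=x/n!$. For $j\in\{1,\dots,n\}$ put $m_j:=n!/j$; then $m_j$ is an integer with $1\le m_j\le n!$, so $m_j\in J_{n!}$ and $-m_j\in J_{n!}$, and $x/j=m_j u$. Taking $k=\mp m_j$ in $z-ku\in A$ yields $z\pm m_j u=z\pm x/j\in A$ for every $j\in\{1,\dots,n\}$. Hence, with $w:=z$,
$$w\in\bigcap_{j=1}^n\Bigl[\Bigl(A-\frac{x}{j}\Bigr)\cap\Bigl(A+\frac{x}{j}\Bigr)\Bigr],$$
so this intersection is nonempty whenever $x/n!\in F_{n!}(A)$.

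Finally I would pass to a neighborhood of $0$. Since $X$ is a linear space, the map $x\mapsto x/n!$ is a homeomorphism, so $\{x\in X:\ x/n!\in F_{n!}(A)\}=n!\,F_{n!}(A)$ is open and contains $0$. As shown above this set is contained in $\{x\in X:\ \bigcap_{j=1}^n[(A-x/j)\cap(A+x/j)]\neq\emptyset\}$, which gives the claim. The only points needing care are purely arithmetic---that $n!/j\in\{1,\dots,n!\}$ and that passing from non-Haar meagerness to nonemptiness is legitimate---so I expect no genuine obstacle; the whole content is the factorial rescaling that turns the fractional translations $x/j$ into integer translations $ku$ already controlled by $F_{n!}(A)$.
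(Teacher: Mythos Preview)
Your proof is correct and follows essentially the same approach as the paper: both apply Corollary~\ref{c2} with $n!$ in place of $n$, use that $n!/j\in J_{n!}$ for each $j\in\{1,\dots,n\}$ to show $n!\,F_{n!}(A)$ is contained in the target set, and conclude since $n!\,F_{n!}(A)$ is an open neighborhood of $0$. The only cosmetic difference is that the paper phrases the key step as a set-theoretic inclusion $\bigcap_{k\in J_{n!}}(A+kx/n!)\subset\bigcap_{i=1}^n(A+x/i)\cap(A-x/i)$, whereas you pick a witness $z$ and verify membership pointwise.
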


\begin{proof}
First observe that
\begin{equation}\label{w3}
\begin{array}{ll}
\bigcap_{k\in J_{n!}}\left(A+k\frac{x}{n!}\right)&
\subset \bigcap_{k=(n-1)!}^{n!}\left(A+k\frac{x}{n!}\right)\cap \left(A-k\frac{x}{n!}\right)\\[1ex]
&\subset \bigcap_{i=1}^n \left(A+\frac{x}{i}\right)\cap\left(A-\frac{x}{i}\right)
\end{array}\end{equation}
for $x\in X$, because $\left\{\frac{n!}{k}:\,k\in\{(n-1)!,\ldots,n!\}\right\}\supset\{1,\ldots,n\}$. For $x\in n!F_{n!}(A)$ the set
$\bigcap_{k\in J_{n!}}\left(A+k\frac{x}{n!}\right)$ is not Haar meager, so, in view of \eqref{w3}, $\bigcap_{i=1}^n \left(A+\frac{x}{i}\right)\cap\left(A-\frac{x}{i}\right)$ is not Haar meager, too. It means that
$$
\left\{x\in X:  \bigcap_{j=1}^n \left[\left(A-\frac{x}{j}\right)\cap\left(A+\frac{x}{j}\right)\right]\neq \emptyset\right\}\supset n!F_{n!}(A)
$$
and to end the proof it is
enough to apply Corollary~\ref{c2}.
\end{proof}

\section{Applications for $n$-convex functions and polynomial functions of $n$-th order}

Now, assume that $X$ is a real linear Polish space. From \cite{Gajda} we have the following useful definition.

\begin{definition}
Let $A\subset X$, $U\subset X$ and $U\neq\emptyset$. A point $x\in A$ is said to be $\mathbb{Q}_U-$internal point of $A$ iff for each $h\in U$ there is $\varepsilon >0$ such that $x+\lambda h\in A$ for each $\lambda\in (-\varepsilon,\varepsilon)\cap\mathbb{Q}$.
\end{definition}

The most important tool in proofs of main results is the following

\begin{lemma}\label{lem7}
Let $A\subset X$ be $\mathcal{D}-$measurable and assume that $U\subset X$ is not Haar meager. If $x\in A$ is a $\mathbb{Q}_U$-internal point of the set $A$, then $x\in\inn H_n(A)$ for each $n\in\mathbb{N}$, where
$$
H_n(A):=\{x\in X:\exists\; h\in X \;\mbox{such that}\; x-ih, x+ih\in A\;\mbox{for}\;i\in\{1,\ldots,n\}\}.
$$
\end{lemma}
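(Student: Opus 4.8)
The plan is to normalise the problem, extract a uniform rational cone from the internal-point hypothesis, and then produce, for every $y$ near $x$, an equally spaced $2n$-point configuration in $A$ centred exactly at $y$; the last step is where the real work lies. First I would translate so that $x=0$. Since $H_n(A)-x=H_n(A-x)$, and since $\mathcal D$-measurability, non-Haar-meagerness and the $\mathbb Q_U$-internal condition are all invariant under translation, it is enough to treat $x=0$, where $0\in A$ and, for every $h\in U$, $\lambda h\in A$ for all $\lambda\in(-\varepsilon_h,\varepsilon_h)\cap\mathbb Q$. Next I would make $\varepsilon$ uniform: writing $U=\bigcup_{m\in\mathbb N}U_m$ with $U_m:=\{h\in U:\lambda h\in A\ \text{for}\ \lambda\in(-\tfrac1m,\tfrac1m)\cap\mathbb Q\}$, the sets $U_m$ increase to $U$, so since $\mathcal{HM}$ is a $\sigma$-ideal (Theorem~\ref{t1}) and $U\notin\mathcal{HM}$, some $U_0:=U_{m_0}$ is not Haar meager. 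Fixing $\varepsilon:=1/m_0$, the set $A$ then contains the symmetric rational cone $C:=\{\lambda h:h\in U_0,\ \lambda\in(-\varepsilon,\varepsilon)\cap\mathbb Q\}$.

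Second, I would record that $A$ is a non-Haar-meager $\mathcal D$-measurable set: for a fixed rational $\lambda_0\in(0,\varepsilon)$ we have $\lambda_0 U_0\subset C\subset A$, and $\lambda_0 U_0\notin\mathcal{HM}$ by Proposition~\ref{jed}, so $A\notin\mathcal{HM}$. Consequently the machinery of the previous section applies to $A$: by Corollary~\ref{c2} the set $F_n(A)$ is an open neighbourhood of $0$, so for every sufficiently small step $s$ the equally spaced pattern $\bigcap_{k\in J_n}(A+ks)$ is non-Haar-meager, in particular nonempty. Reformulated, the goal $0\in\inn H_n(A)$ asks that for every $y$ in some ball $K(0,\delta)$ there be a single $h$ with $y+ih\in A$ and $y-ih\in A$ for $i\in\{1,\dots,n\}$.

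The heart of the argument is to upgrade ``a configuration exists somewhere'' to ``a configuration exists centred exactly at the prescribed $y$'', and this is precisely where the cone $C$ is used. The plan is to run a Piccard-type argument of the same shape as the proof of Theorem~\ref{G}: encode the candidate $2n$-point configurations by a continuous map from a compact metric group into $X$ whose coordinate displacements are drawn from the non-Haar-meager set of directions $U_0$, use the non-Haar-meagerness of (the Borel part of) $A$ to obtain a non-meager preimage with the Baire property, apply Lemma~\ref{B} to that preimage to obtain a genuine configuration inside the parameter group, and finally read it off in $X$ by a combinatorial case analysis that places the common centre at $y$. The cone is essential here because it supplies, in a non-Haar-meager set of directions and at every small rational scale, the symmetric increments that let the base point of the configuration be transported onto $y$ while keeping all $2n$ points inside $A$.

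The step I expect to be the main obstacle is exactly this centring. Corollaries~\ref{c2} and~\ref{c6} only guarantee that $\bigcap_{k\in J_n}(A+ks)$ is nonempty for small $s$, with an uncontrolled base point, whereas membership in $H_n(A)$ demands the configuration be symmetric about the given $y$; bridging that gap is what forces the use of the $\mathbb Q_U$-internal hypothesis. A genuine subtlety to respect throughout is that $U$, hence $U_0$, is not assumed measurable, so one must \emph{not} apply the Piccard theorem to $U_0$ itself: all Baire-property and measurability must be carried by $A$, with $U_0$ entering only through the $\sigma$-ideal property of $\mathcal{HM}$ and the scaling invariance of Proposition~\ref{jed}. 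I anticipate combinatorial bookkeeping comparable to the three-case computation in the proof of Theorem~\ref{G}.
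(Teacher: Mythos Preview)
Your preliminary reductions (translating to $x=0$, decomposing $U=\bigcup_m U_m$ and extracting a non--Haar meager $U_{m_0}$ with a uniform $\varepsilon=1/m_0$) are correct and match the intended argument. The gap is in what you do next.

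You insist that ``all Baire-property and measurability must be carried by $A$, with $U_0$ entering only through the $\sigma$-ideal property'', and therefore you apply Corollary~\ref{c2} to $A$ and are then stuck with the centring problem, which you propose to solve by re-running a Theorem~\ref{G}--type construction with ``coordinate displacements drawn from $U_0$''. This last plan is vague (you never say how a sequence is selected from the unstructured set $U_0$, nor why the resulting map would do the job), and more importantly it overlooks the step that makes the proof short. The set
\[
V_0:=\bigcap_{\lambda\in(-\varepsilon,\varepsilon)\cap\mathbb Q}\lambda^{-1}A
\]
contains $U_0$, hence is not Haar meager, and it \emph{is} $\mathcal D$-measurable: each $\lambda^{-1}A$ is $\mathcal D$-measurable by Proposition~\ref{jed} together with the fact that Borel sets scale to Borel sets, and $\mathcal D$ is a $\sigma$-algebra (Theorem~\ref{t2}). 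So Corollary~\ref{c6} applies to $V_0$, not to $A$. This is exactly why the paper singles out Corollary~\ref{c6} and Proposition~\ref{jed} as the two ingredients.

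Once you apply Corollary~\ref{c6} to $V_0$, the centring is automatic. For $y$ in a neighbourhood of $0$ there is $z$ with $z\pm y/j\in V_0$ for $j=1,\dots,n$. Fix a rational $\lambda_0\in(0,\varepsilon/n)$; then $\pm j\lambda_0\in(-\varepsilon,\varepsilon)\cap\mathbb Q$, so by the definition of $V_0$,
\[
j\lambda_0\Bigl(z+\tfrac{y}{j}\Bigr)=\lambda_0 y+j(\lambda_0 z)\in A,
\qquad
-j\lambda_0\Bigl(z-\tfrac{y}{j}\Bigr)=\lambda_0 y-j(\lambda_0 z)\in A,
\]
i.e.\ $\lambda_0 y\in H_n(A)$ with witness $h=\lambda_0 z$. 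Thus $\lambda_0\cdot(\text{neighbourhood of }0)\subset H_n(A)$, which is again an open neighbourhood of $0$. No extra Piccard argument or combinatorial case analysis is needed; your belief that the measurability ``must be carried by $A$'' is precisely the missing idea.
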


The proof is analogous to the proof of Lemma~7 in \cite{Gajda} because of Corollary~\ref{c6} and Proposition~\ref{jed}, so we can omit it.\\

\begin{definition}
Let $D\subset X$ be a non-empty open convex set and $S\subset X$ be a~cone satisfying $S\cup (-S)\cup \{0\}=X$. Let $Y$ be a real normed space partially ordered by a relation $\preccurlyeq$ satisfying two conditions:
\begin{itemize}
\item if $x\preccurlyeq y$, then $x+z\preccurlyeq y+z$ and $\alpha x\preccurlyeq\alpha y$ for every $x,y,z\in Y$ and $\alpha\in [0,\infty)$;
\item if $0\preccurlyeq x\preccurlyeq y$, then $\|x\|\leq\|y\|$ for every $x,y\in Y$.
\end{itemize}
Let $n\in\mathbb{N}$. A function $f:D\to Y$ is called $n$-convex with respect to the cone $S$ iff
$$
\Delta_h^{n+1}f(x)=\sum_{j=0}^{n+1}(-1)^{n+1-j}\binom{n+1}{j}f(x+jh)\geq 0
$$
holds for all $x\in D$ and $h\in S$ such that $x+ih\in D$ for $i\in\{0,1,\ldots,n+1\}$.
\end{definition}

\begin{theorem}\label{uvw}
If $f:D\to Y$ is a $\mathcal{D}$-measurable function which is $n$-convex with respect to the cone $S$, then it is continuous.
\end{theorem}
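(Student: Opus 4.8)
The plan is to deduce continuity from \emph{local boundedness} of $f$ by the classical theory of convex functions of higher order, so that the only genuinely new ingredient is the passage from $\mathcal{D}$-measurability to local boundedness, for which Lemma~\ref{lem7} is tailor-made.

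First I would use $\mathcal{D}$-measurability to bound $f$ on a large set. Since $Y$ is normed, $D=\bigcup_{m\in\mathbb{N}}A_m$ with $A_m:=\{x\in D:\|f(x)\|\le m\}$, and each $A_m$ is $\mathcal{D}$-measurable because it is the preimage of a closed ball and $\mathcal{D}$ is a $\sigma$-algebra (Theorem~\ref{t2}). As a nonempty open subset of a Polish (hence Baire) space, $D$ is non-meager, so by Theorem~\ref{t1} it is non-Haar meager; since $\mathcal{HM}$ is a $\sigma$-ideal, some $A:=A_{m_0}$ is a non-Haar meager $\mathcal{D}$-measurable set on which $\|f\|\le m_0$.

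Next I would feed $A$ into Lemma~\ref{lem7} (which rests on Corollary~\ref{c6} and Proposition~\ref{jed}). To do so I must produce a $\mathbb{Q}_U$-internal point of $A$ for some non-Haar meager $U$, and this is the step I expect to be the main obstacle: a non-Haar meager set need not be non-meager, so the usual category-density heuristic (a non-meager Baire-property set is locally comeager) is unavailable, and the internal point has to be extracted from the finer Haar-meager structure, by probing $A$ with continuous maps from a compact group exactly as in the proof of Theorem~\ref{G} and as in the corresponding step of \cite{Gajda}. Granting such a point $x_0$, Lemma~\ref{lem7} gives $x_0\in\inn H_n(A)$, so there is a nonempty open $V\subset H_n(A)$: for each $x\in V$ there is $h\in X$ with $x\pm ih\in A$, whence $\|f(x\pm ih)\|\le m_0$, for $i\in\{1,\ldots,n\}$.

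Then I would turn this boundedness at the $2n$ symmetric points into boundedness of $f$ on $V$ itself, and here the hypotheses on $S$ and $\preccurlyeq$ are used. Because $S\cup(-S)\cup\{0\}=X$, for the chosen $h$ either $h$ or $-h$ lies in $S$, so the defining inequality $\Delta^{n+1}_{\pm h}f\succcurlyeq 0$ holds along the relevant line; forming the $(n+1)$-st differences based at suitable points among $x-nh,\ldots,x+nh$ sandwiches $f(x)$ between fixed $\preccurlyeq$-combinations of the bounded values $f(x\pm ih)$, and the two monotonicity axioms on $\preccurlyeq$ convert these order estimates into a norm bound that is uniform in $x\in V$ (the coefficients are the fixed binomials $\binom{n+1}{j}$). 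Thus $f$ is bounded on the nonempty open set $V$. Finally I would invoke the higher-order Bernstein--Doetsch theorem: an $n$-convex function bounded on a nonempty open subset of its open convex domain is continuous throughout the domain; this is classical for real functions (cf.\ \cite{Kuczma}) and its vector-valued version under the present ordering axioms is the one used in \cite{Gajda}, so it applies verbatim and finishes the proof.
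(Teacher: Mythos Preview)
Your approach is essentially the paper's own: the paper's entire proof is the sentence ``the proof runs in the same way as the proof of Theorem~2 in \cite{Gajda}; it is enough to replace \cite[Lemma~7]{Gajda} by Lemma~\ref{lem7}'', and what you have written is a faithful outline of that very argument (level-set decomposition, Lemma~\ref{lem7}, the difference-operator sandwich, higher-order Bernstein--Doetsch). The step you single out as the obstacle---producing a $\mathbb{Q}_U$-internal point of the non-Haar meager level set $A$---is exactly the step Gajda carries out in his proof of Theorem~2, and the paper discharges it by the blanket citation; so rather than ``granting'' it you should either invoke \cite[Theorem~2]{Gajda} as the paper does or reproduce Gajda's construction verbatim, since nothing in it uses Christensen measurability beyond the $\sigma$-ideal and homogeneity properties that Theorems~\ref{t1}, \ref{t2} and Proposition~\ref{jed} already supply for $\mathcal{HM}$.
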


\begin{proof}
In view of Theorem~\ref{t1}, Theorem~\ref{t2} and Proposition~\ref{jed} the proof of the theorem runs in the same way as the proof of Theorem~2 in \cite{Gajda}; it is enough to replace \cite[Lemma~7]{Gajda} by Lemma~\ref{lem7}.
 \end{proof}

\begin{definition}
Assume that $Y$ is a linear topological space (not necessary ordered and normed). Let $n\in\mathbb{N}$. A function $f:D\to Y$ is called a polynomial function of $n$-th order iff
$$
\Delta_h^{n+1}f(x)=0
$$
for all $x\in D$ and $h\in X$ such that $x+ih\in D$ for $i\in\{0,1,\ldots,n+1\}$.
\end{definition}

\begin{theorem}
If $f:D\to Y$ is a $\mathcal{D}$-measurable polynomial function of $n$-th order, then it is continuous.
\end{theorem}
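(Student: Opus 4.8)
The plan is to run the argument exactly parallel to the proof of Theorem~\ref{uvw} and to Gajda's treatment in \cite{Gajda}, using Theorem~\ref{t1}, Theorem~\ref{t2} and Proposition~\ref{jed} wherever one needs the $\sigma$-ideal structure of $\mathcal{HM}$, the $\sigma$-algebra structure of $\mathcal{D}$, and invariance of Haar meagerness under scaling, and replacing \cite[Lemma~7]{Gajda} by Lemma~\ref{lem7}. The only structural novelty relative to the $n$-convex case is that here $\Delta_h^{n+1}f(x)=0$ instead of $\Delta_h^{n+1}f(x)\geq 0$; this equality gives \emph{two-sided} control of $f$ along arithmetic progressions, which is precisely what lets the target $Y$ be merely a linear topological space, with neither order nor norm.

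The first step is to exhibit an open set on which $f$ takes values in a fixed bounded set. Pick an open $V\subset Y$; by $\mathcal{D}$-measurability $A:=f^{-1}(V)$ is $\mathcal{D}$-measurable, and since $D=f^{-1}(Y)$ is a nonempty open set it is non-meager, hence non-Haar meager (as $\mathcal{HM}$ is contained in the meager ideal by Theorem~\ref{t1}); arranging $A$ to be non-Haar meager and locating in it a $\mathbb{Q}_U$-internal point $x_0$ for some non-Haar meager $U$ (a ball, say), Lemma~\ref{lem7} gives $x_0\in\inn H_n(A)$. Thus a whole neighbourhood $W$ of $x_0$ lies in $H_n(A)$: for each $y\in W$ there is $h\in X$ with $y\pm ih\in A$, i.e.\ $f(y\pm ih)\in V$ for $i\in\{1,\ldots,n\}$.

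The second step turns this into boundedness of $f$ on $W$ via the functional equation. Since $\Delta_h^{n+1}f=0$ and, by convexity of $D$, all intermediate points $y+ih$ with $i\in\{-n,\ldots,n\}$ lie in $D$, the finite sequence $i\mapsto f(y+ih)$ is the restriction of an ordinary polynomial of degree at most $n$ in the index. Choosing any $n+1$ of the available nonzero nodes $I\subset\{-n,\ldots,-1,1,\ldots,n\}$ (possible as $2n\geq n+1$) and evaluating the interpolant at the centre yields $f(y)=\sum_{i\in I}c_i\,f(y+ih)$ with fixed rational coefficients $c_i$ depending only on $I$, not on $y$ or $h$. As $f(y+ih)\in V$ for $i\in I$ and a fixed finite linear combination of a bounded set is bounded in a topological vector space, $f(y)$ stays in one fixed bounded set for all $y\in W$; hence $f$ is bounded on the neighbourhood $W$.

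Finally one upgrades local boundedness to continuity through the classical theory of polynomial functions: writing $f=\sum_{k=0}^{n}f_k$ with $f_k$ the diagonalization of a symmetric $k$-additive map, boundedness of $f$ on a neighbourhood forces each such multiadditive map, and thus each $f_k$ and then $f$, to be continuous, after which the functional equation transports continuity across all of $D$. I expect the two ends of this chain to be the real work. The first is guaranteeing a $\mathbb{Q}_U$-internal point of the non-Haar meager $\mathcal{D}$-measurable set $A$ in a possibly non-locally compact $X$; this is exactly where the Piccard-type machinery of Section~3 (in particular Corollary~\ref{c6}, which already underlies Lemma~\ref{lem7}) must be invoked as in \cite{Gajda}. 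The second is the local-boundedness-implies-continuity step for a target that is only a linear topological space, so that ``bounded'' and the bootstrapping must be phrased in terms of absorbing neighbourhoods rather than a norm, and the measurability of the components $f_k$ must be read off from that of $f$ using Proposition~\ref{jed} and Theorem~\ref{t2}.
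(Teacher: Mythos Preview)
Your proposal is correct and follows exactly the route the paper takes: the paper's own proof is the one-line observation that, given Theorem~\ref{t1}, Theorem~\ref{t2} and Proposition~\ref{jed}, Gajda's proof of \cite[Theorem~3]{Gajda} goes through verbatim once \cite[Lemma~7]{Gajda} is replaced by Lemma~\ref{lem7}. Your additional sketch of Gajda's argument (locating a non--Haar meager preimage, producing a $\mathbb{Q}_U$-internal point, interpolating $f(y)$ from the $2n$ values $f(y\pm ih)$ via $\Delta_h^{n+1}f=0$, and then passing from local boundedness to continuity) is a faithful expansion of that reference, and you correctly flag the two places where the real work sits.
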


\begin{proof}
 Since Theorem~\ref{t1}, Theorem~\ref{t2} and Proposition~\ref{jed} hold the proof of the theorem is analogous to the proof of Theorem~3 in \cite{Gajda}; we have to replace \cite[Lemma~7]{Gajda} by Lemma~\ref{lem7}.
 \end{proof}

\end{document}